\def\R{\mathbb R}
\def\1{\mathbf{1}}
\def\P{\mathcal{P}}
\def\vol{\mathrm{vol}}
\def\pmod #1{\ ({\rm mod}\ #1)}
\theoremstyle{plain}
\newtheorem{theorem}{Theorem}
\newtheorem{lemma}{Lemma}
\theoremstyle{definition}
\theoremstyle{remark}
\newtheorem*{remark}{Remark}
\begin{document}
\title{Bounded gaps between primes of a special form}
\author{Hongze Li}
\address{Department of Mathematics, Shanghai Jiao Tong University, Shanghai 200240, People's Public of China}
\email{lihz@sjtu.edu.cn}
\author{Hao Pan}
\address{Department of Mathematics, Nanjing University, Nanjing 210093,
People's Republic of China}
\email{haopan1979@gmail.com}
\begin{abstract} For each $m\geq 1$, there exist infinitely many primes
$
p_1<p_2<\ldots<p_{m+1}
$
such that
$$
p_{m+1}-p_1=O(m^4e^{8m})
$$
and
$p_j+2
$
has at most
$$
\frac{16m}{\log 2}+\frac{5\log m}{\log 2}+37
$$
prime divisors for each $j$.
\end{abstract}
\maketitle

\section{Introduction}
\setcounter{lemma}{0}\setcounter{theorem}{0}\setcounter{corollary}{0}
\setcounter{equation}{0}

A recent breakthrough in prime number theory concerns bounded prime
gaps. In \cite{Z}, with the help of a refined GPY sieve method
\cite{GPY} and an enhanced Bombieri-Vinogradov theorem, Zhang proved
that
\begin{equation}
\liminf_{n\to\infty}(p_{n+1}-p_n)\leq 7\times 10^7,
\end{equation}
where $p_n$ denotes the $n$-th prime. Subsequently, the bound $7\times 10^7$ has been rapidly reduced (cf. \cite{P}). In \cite{M2}, using a multi-dimensional sieve method, Maynard improved the upper bound $600$. Furthermore, using the new sieve method, Maynard and Tao independently proved that
\begin{equation}
\liminf_{n\to\infty}(p_{n+m}-p_n)\leq Cm^3e^{4m}
\end{equation}
for any $m\geq 1$, where $C$ is an absolutely constant.

In fact, using the discussions of Maynard and Tao, one can get a bounded-gaps type result for any subsequence of primes which satisfies the Bombieri-Vinogradov type mean value theorem. Let
$$
\P_d^{(2)}=\{p:\,p\text{ is prime and }\Omega(p+2)\leq d\},
$$
where $\Omega(n)$ denotes the number of prime divisors of $n$.
No asymptotic formula is known for the number of the primes in $\P_d^{(2)}$ less than $x$,
though the well-known Chen theorem asserts that
$$
|\P_2^{(2)}\cap[1,x]|\gg\frac{x}{(\log x)^2}
$$
provided $x$ is sufficiently large.

In this paper, we shall extend the Maynard-Tao theorem to the primes in $\P_d^{(2)}$. Our main result is
\begin{theorem}\label{mt}
Suppose that $m\geq 1$. Then there exist infinitely many primes
$
p_1<p_2<\ldots<p_{m+1}
$
such that
$$
p_{m+1}-p_1=O(m^4e^{8m})
$$
and
\begin{equation}\label{omega}
\Omega(p_j+2)\leq\frac{16m}{\log 2}+\frac{5\log m}{\log 2}+37.
\end{equation}
for each $1\leq j\leq m+1$. In particular, there exist infinitely
many distinct $$p_1,p_2\in\P_{59}^{(2)}$$ such that
$$
|p_2-p_1|\leq 3\times 10^{9}.
$$
\end{theorem}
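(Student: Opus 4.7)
The plan is to run the Maynard--Tao multidimensional sieve while simultaneously controlling $\Omega(p+2)$ via a Chen-type weighted sieve. Fix $k$ of size $\asymp m^3 e^{8m}$ and an admissible $k$-tuple $\mathcal H=(h_1,\dots,h_k)$ of even integers of diameter $O(m^4 e^{8m})$, chosen so that $\mathcal H\cup(\mathcal H+2)$ is also admissible; apply the $W$-trick with $W=\prod_{p\leq D_0}p$, $D_0=\log\log\log N$, and a residue $b\pmod W$ for which $\gcd(b+h_i,W)=\gcd(b+h_i+2,W)=1$ for all $i$. Define the Maynard weight
$$w_n=\Bigl(\sum_{\substack{d_i\mid n+h_i\\ \prod d_i\leq R}}\lambda_{d_1,\dots,d_k}\Bigr)^2,$$
at level $R=N^{1/4-\varepsilon}$, with $\lambda_{\mathbf d}$ built from a smooth symmetric function on the standard $k$-simplex as in Maynard.

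Set $S_1=\sum_n w_n$ and $S_2=\sum_{i=1}^k\sum_n\1_{\mathbb P}(n+h_i)\,w_n$, both restricted to $N\leq n<2N$ and $n\equiv b\pmod W$. Maynard's computation using Bombieri--Vinogradov yields $S_2/S_1\geq (1/4+o(1))M_k$, where $M_k\gtrsim \log k$; the choice of $k$ ensures $S_2/S_1>2m+c$ for an absolute constant $c>0$. Define the bad sum
$$T_d=\sum_{i=1}^k\sum_n \1_{\mathbb P}(n+h_i)\,\1_{\Omega(n+h_i+2)>d}\,w_n.$$
It suffices to prove $T_d\leq m\,S_1$, for then $S_2-T_d>m\,S_1$ and some $n$ in the support admits at least $m+1$ indices $i$ with $n+h_i$ prime and $\Omega(n+h_i+2)\leq d$. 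Letting $N\to\infty$ produces infinitely many such tuples $p_1<\cdots<p_{m+1}$, with diameter bounded by $\mathrm{diam}(\mathcal H)=O(m^4 e^{8m})$.

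The technical core is the estimation of $T_d$. Using the pointwise inequality $\1_{\Omega(N)>d}\leq 2^{\Omega(N)-d-1}$ reduces this to bounding
$$U=\sum_{i=1}^k\sum_n\1_{\mathbb P}(n+h_i)\,2^{\Omega(n+h_i+2)}\,w_n.$$
Decomposing $2^{\Omega(N)}=\sum_{e\mid N}g(e)$ with $g$ multiplicative ($g(p^a)=2^{a-1}$) and applying Bombieri--Vinogradov with the extra congruence $e\mid n+h_i+2$, each inner sum equals $\phi(e)^{-1}S_2^{(i)}$ up to an acceptable error for $eR\leq N^{1/2-\varepsilon}$. Summing against $g(e)/\phi(e)$, truncating at a suitable cutoff, and balancing the exponential savings $2^{-d-1}$ against the resulting Mertens-type growth yields $T_d\leq m\,S_1$ whenever $d\geq 16m/\log 2+5\log m/\log 2+37$. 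The main obstacle is precisely this weighted-sieve bookkeeping: one must show that the constant in a bound of the form $U\leq C_m\,S_2$ depends only on $m$ and not on $N$, so that $d$ can be taken constant; the interplay between the sieve truncation level and the Maynard sieve dimension is what produces the numerology $16/\log 2$ in the statement. The numerical corollary for $m=1$ follows by tracking explicit constants through this optimisation.
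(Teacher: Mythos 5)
Your high-level architecture — a Maynard-type sieve on a paired admissible tuple, with a secondary weight that penalises large $\Omega(n+h_i+2)$ — matches the paper's, but the mechanism you propose for handling the secondary weight is different and has a real gap.

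The paper does not attempt to run Bombieri--Vinogradov with an extra congruence $e\mid n+h_i+2$ riding on top of the sieve moduli. That route is blocked by level-of-distribution: after expanding $w_n$ the modulus is already $W\prod_j[d_j,e_j]\leq WR^2\approx N^{1/2-2c_0}$, so the extra divisor $e$ is confined to $e\lesssim N^{c_0}$ (your ``$eR\leq N^{1/2-\varepsilon}$'' should read $eR^2\leq N^{1/2-\varepsilon}$). But your decomposition $2^{\Omega(N)}=\sum_{e\mid N}g(e)$ runs $e$ over all divisors of $N\leq 2x$, and you offer nothing to control the tail $e>N^{c_0}$. That tail is exactly the enemy: one is implicitly asking to detect primes $n+h_i$ lying in progressions to moduli far beyond $N^{1/2}$, and ``truncating at a suitable cutoff'' is the entire difficulty, not a routine step. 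Separately, $2^{\Omega(N)}$ strictly exceeds $\tau(N)$ on non-squarefree $N$; using $2^\Omega$ rather than $\tau$ adds extraneous weight at prime powers for which the divisor decomposition with $g(p^a)=2^{a-1}$ does not inherit the good arithmetic-progression behaviour of $\tau$.

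The paper sidesteps all of this by decoupling the primality from the divisor weight. It writes $\theta(n+h_{2j-1})\tau(n+h_{2j})\leq(\log 2x)\,\tau(n+h_{2j})$ and computes the resulting $\tau$-weighted sieve sum (Theorem~2.1) not via Bombieri--Vinogradov but via Smith's explicit formula for $\sum_{n\equiv a\pmod q}\tau(n)$, whose error term $O_\varepsilon(x^{1/3+\varepsilon})$ is unconditional and, crucially, independent of $q$ — so the whole range of sieve moduli up to $R^2$ is admissible with no truncation problem. To ensure $\tau(n+h_{2j})=2^{\Omega(n+h_{2j})}$ the paper restricts to $\mu(n+h_{2j})\neq 0$, and separately shows (by a Brun-type bound summed over $p>w$) that the squarefull exceptional set contributes $o(1)$. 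This replaces both your divisor decomposition and your BV application; it is the real content, and without something equivalent your argument cannot close. Finally, the specific constants $16m/\log 2+5\log m/\log 2+37$ in the statement are produced by the quantitative analysis of $\alpha,\beta_1,\beta_2$ in Theorem~2.1 together with Maynard's explicit smooth test function; your sketch has no mechanism that would yield these numbers, so the claim that ``the numerical corollary follows by tracking explicit constants'' is not supported by what precedes it.
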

Throughout this paper, let $\mu$ denote the M\"obius function, $\phi$ denote the Euler 
totient function and let $\tau(n)=\sum_{d\mid n}1$ be the divisor function. And unless indicated otherwise, the constants implied by $\ll$, $\gg$ and $O(\cdot)$ are always absolute.

\section{The Maynard sieve method involving the divisor function}
\setcounter{lemma}{0}\setcounter{theorem}{0}\setcounter{corollary}{0}
\setcounter{equation}{0}

Define the area
$$
\Delta_{k_0,r}=\{(t_1,\ldots,t_{k_0}):\,t_j\geq 0,\ t_1+\cdots+t_{k_0}\leq r\}.
$$
Suppose that $f(t_1,\ldots,t_{k_0})$ is a smooth function supported on $\Delta_{k_0,1}$.
Let
$$
F(t_1,\ldots,t_{k_0})=\frac{\partial^{k_0}f(t_1,\ldots,t_{k_0})}{\partial t_1\cdots\partial t_{k_0}}
$$
and
$$
G_m(t_1,\ldots,t_{k_0})=\frac{\partial F(t_1,\ldots,t_{k_0})}{\partial t_m}.
$$

Suppose that $\{h_1,\ldots,h_{k_0}\}$ is {\it admissible}, i.e., for any prime $p$, $h_1,\ldots,h_{k_0}$ don't cover all residues modulo $p$. Suppose that $x$ is sufficiently large and $x^{c_0}\leq R\leq x^{1/4-c_0}$ for some constant $c_0>0$. Let $w=\log\log\log x$ and
$$
W=\prod_{p\leq w}p.
$$
Since $\{h_1,\ldots,h_{k_0}\}$ is admissible, we may choose $1\leq b\leq W$ such that $(b+h_j,W)=1$ for each $j$.

The following result is motived by the work in \cite{H, HT, M1}.
\begin{theorem}\label{tt} For $1\leq m\leq k_0$,
\begin{align}
&\sum_{\substack{x\leq n<2x\\ n\equiv
b\pmod{W}}}\tau(n+h_m)\bigg(\sum_{d_j\mid n+h_j}f\bigg(\frac{\log
d_1}{\log R},\ldots,\frac{\log d_{k_0}}{\log
R}\bigg)\prod_{j=1}^{k_0}\mu(d_j)\bigg)^2 \notag\\=&\bigg(\frac{\log
x}{\log
R}\cdot\alpha-\beta_1+4\beta_2+o(1)\bigg)\cdot\frac{x}{(\log
R)^{k_0}}\cdot\frac{W^{k_0-1}}{\phi(W)^{k_0}},
\end{align}
where 
\begin{equation}\label{ta}
\alpha=\int_{\Delta_{k_0,1}} t_mG_m(t_1,\ldots,t_{k_0})^2 d t_1\cdots d t_{k_0},
\end{equation}
\begin{equation}
\beta_1=\int_{\Delta_{k_0,1}}t_m^2G_m(t_1,\ldots,t_{k_0})^2d t_1\cdots d t_{k_0}
\end{equation}
and
\begin{equation}
\beta_2=\int_{\Delta_{k_0,1}}t_mG_m(t_1,\ldots,t_{k_0})F(t_1,\ldots,t_{k_0})d t_1\cdots d t_{k_0}.
\end{equation}
\end{theorem}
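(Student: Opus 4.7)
\emph{Proof plan.} My strategy is to combine the Maynard second-moment sieve computation with the Dirichlet hyperbola method for $\tau(n+h_m)$ in arithmetic progressions. I would begin by expanding the square and switching the order of summation to obtain
\[
S=\sum_{\vec d,\vec e}\left(\prod_{j=1}^{k_0}\mu(d_j)\mu(e_j)\right)f\!\left(\tfrac{\log\vec d}{\log R}\right)f\!\left(\tfrac{\log\vec e}{\log R}\right)T(\vec d,\vec e),
\]
where $T(\vec d,\vec e)$ sums $\tau(n+h_m)$ over $n\in[x,2x)$ with $n\equiv b\pmod W$ and $[d_j,e_j]\mid n+h_j$ for each $j$. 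The $W$-trick and the admissibility of $\{h_j\}$ force the $q_j:=[d_j,e_j]$ to be pairwise coprime, squarefree, and coprime to $W$, so the joint congruences collapse to a single residue class modulo $WQ$ with $Q=\prod_jq_j$.

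The main new arithmetic input will be an asymptotic for $T(\vec d,\vec e)$. Writing $\tau(n+h_m)=\sum_{ab=n+h_m}1$ and applying the Dirichlet hyperbola method in this progression should produce a two-term main asymptotic of the shape
\[
T(\vec d,\vec e)=\frac{x}{\phi(W)Q}\Bigl(\log x-\log Q+c(Q)\Bigr)+(\text{error}),
\]
for a bounded multiplicative function $c(Q)$ determined by the local factors at primes $p\mid Q$. The hypothesis $R\le x^{1/4-c_0}$ gives $Q\le R^2\le x^{1/2-2c_0}$, which is exactly the room needed for the hyperbola error to be absorbed after summation. Substituting this back splits $S$ into three pieces, corresponding to the summands $\log x$, $-\log Q$, and $c(Q)$.

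Each of these three pieces is then evaluated by the standard Maynard procedure: pass from $f$ to its derivative $F$ via integration by parts, compute the local Euler product at primes above $w$, and isolate the main contribution by a Mellin contour shift. The $\log x$ piece produces the leading term $(\log x/\log R)\alpha$; the weight $t_mG_m^2$ arises because the coordinate $m$ is distinguished by $\tau(n+h_m)$, so the Mellin integral introduces one additional derivative at $t_m$ (producing $G_m$), while the factor $t_m$ comes from the $\log x$ itself. The $-\log Q$ piece concentrates on the $m$-th coordinate (off-diagonal contributions with $j\ne m$ cancel against matching local factors absorbed into $c(Q)$) and yields $-\beta_1$ through the pairing $(\log d_m+\log e_m-\log(d_m,e_m))\leftrightarrow t_m$. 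The residual $c(Q)$ piece produces $4\beta_2$, the constant $4$ emerging as a factor $2$ from the hyperbola symmetry combined with a factor $2$ from the pair of weights $d_m,e_m$.

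The main obstacle I expect will be the error analysis for $T(\vec d,\vec e)$: the hyperbola loss of $O(x^{1/2+\varepsilon})$ per term must survive summation over $R^{2k_0}$ possible values of $(\vec d,\vec e)$, and it is precisely this that forces the restriction $R\le x^{1/4-c_0}$, in contrast to the $x^{1/2-c_0}$ range available when the Bombieri--Vinogradov theorem is applied to the primes. A secondary challenge will be the careful bookkeeping of the integration-by-parts steps that convert $f$ into $F$ and then into $G_m=\partial F/\partial t_m$, and in particular the verification of the precise coefficient $4$ in front of $\beta_2$ after symmetrization in the $m$-th coordinate.
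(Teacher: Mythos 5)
Your high-level plan (expand the square, evaluate $\sum\tau$ in arithmetic progressions, then run the Maynard Fourier/Mellin machinery on the three resulting pieces) is the same route the paper takes, but the shape you wrote down for $T(\vec d,\vec e)$ is wrong in a way that loses the key structural feature of the problem. The paper quotes Smith's formula: for squarefree $q$ and $r=(a,q)$,
\[
\sum_{\substack{x\leq n<2x\\ n\equiv a\ (q)}}\tau(n)
=\frac{\tau(r)\,r}{\phi(r)}\cdot\frac{x\,\phi(q)}{q^{2}}
\Bigl(\log x-\log r+2\gamma-1-2\!\!\sum_{p\mid q/r}\!\frac{\log p}{p-1}\Bigr)
+O_{\epsilon}(x^{1/3+\epsilon}).
\]
Here $q=W\prod_j[d_j,e_j]$ and the gcd is $r=[d_m,e_m]$, so after simplification the $m$-th coordinate carries the local weight $\tau([d_m,e_m])/[d_m,e_m]$ while every other coordinate carries $\phi([d_j,e_j])/[d_j,e_j]^{2}$. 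Your proposed shape $T\approx\frac{x}{\phi(W)Q}(\log x-\log Q+c(Q))$ misses the $\tau(r)$ prefactor entirely, has the wrong prefactor $\frac{x}{\phi(W)Q}$ instead of $\frac{x\phi(W)\phi(Q)}{W^2Q^2}\cdot\frac{\tau(r)r}{\phi(r)}$, and replaces $\log r=\log[d_m,e_m]$ by $\log Q$. This is not a bookkeeping slip: the $\tau([d_m,e_m])/[d_m,e_m]$ factor is precisely what makes the local Euler factor at primes $p>w$ in the $m$-th variable behave like a squared zeta ratio, and that in turn is the reason the main term involves $G_m=\partial F/\partial t_m$ rather than $F$. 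You correctly guessed that ``the coordinate $m$ is distinguished\ldots so the Mellin integral introduces one additional derivative at $t_m$,'' but your formula for $T$ does not actually produce this, so the claimed appearance of $G_m$, $\alpha$, $\beta_1$, $\beta_2$ is unjustified from what you wrote. Similarly, since the $\log$ term in the main asymptotic is $\log[d_m,e_m]$ and not $\log Q$, the $-\beta_1$ piece arises solely from the $m$-th coordinate; the ``off-diagonal cancellation against $c(Q)$'' you invoke is not how the paper handles it (the $j\neq m$ contributions enter the $\sum_{p\mid q/r}\log p/(p-1)$ term, which the paper shows is $o(1)$ separately in $M_3$). Finally, using the hyperbola method to derive the divisor asymptotic is a reasonable alternative to citing Smith, and your error estimate is if anything too pessimistic (the paper uses an $O_\epsilon(x^{1/3+\epsilon})$ error, not $x^{1/2+\epsilon}$), but without the $\tau(r)$ factor in the main term the rest of the argument cannot go through as described.
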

\begin{proof}
Let
\begin{equation}
\lambda_{d_1,\ldots,d_{k_0}}=f\bigg(\frac{\log d_1}{\log R},\ldots,\frac{\log d_{k_0}}{\log R}\bigg)\prod_{j=1}^{k_0}\mu(d_j).
\end{equation}
Clearly
\begin{align*}
&\sum_{\substack{x\leq n<2x\\ n\equiv b\pmod{W}}}\tau(n+h_m)\bigg(\sum_{d_j\mid n+h_j}\lambda_{d_1,\ldots,d_{k_0}}\bigg)^2\\
=&\sum_{\substack{d_1,\ldots,d_{k_0}\in S_W\\ e_1,\ldots,e_{k_0}\in S_W\\ (d_{j_1}e_{j_1},d_{j_2}e_{j_2})=1}}\lambda_{d_1,\ldots,d_{k_0}}\lambda_{e_1,\ldots,e_{k_0}}
\sum_{\substack{x\leq n<2x\\ n\equiv b\pmod{W}\\ [d_j,e_j]\mid n+h_j}}\tau(n+h_m),
\end{align*}
where
$$
S_W=\{d:\, (d,W)=1\}.
$$
If $q$ is square-free and $r=(a,q)$, then we know (cf. \cite{S}) that
$$
\sum_{\substack{x\leq n<2x\\ n\equiv a\pmod{q}}}\tau(n)=\frac{\tau(r)r}{\phi(r)}\cdot\frac{x\phi(q)}{q^2}\bigg(\log x-\log r+2\gamma-1-2\sum_{p\mid (q/r)}\frac{\log p}{p-1}\bigg)+O_\epsilon(x^{1/3+\epsilon}).
$$
So
\begin{align}\label{tsum}
&\sum_{\substack{x\leq n<2x\\ n\equiv b\pmod{W}}}\tau(n+h_m)\bigg(\sum_{d_j\mid n+h_j}\lambda_{d_1,\ldots,d_{k_0}}\bigg)^2\notag\\
=&\sum_{\substack{d_1,\ldots,d_{k_0}\in S_W\\ e_1,\ldots,e_{k_0}\in S_W\\ (d_{j_1}e_{j_1},d_{j_2}e_{j_2})=1}}\lambda_{d_1,\ldots,d_{k_0}}\lambda_{e_1,\ldots,e_{k_0}}\cdot
\frac{\tau([d_m,e_m])[d_m,e_m]}{\phi([d_m,e_m])}\cdot\frac{\phi(W\prod_{j=1}^{k_0}[d_j,e_j])}{W^2\prod_{j=1}^{k_0}[d_j,e_j]^2}\notag\\
&\cdot x\bigg(\log x-\log([d_m,e_m])+2\gamma-1-2\sum_{p\mid W\prod_{j\neq m}[d_j,e_j]}\frac{\log p}{p-1}\bigg)\notag\\
&+O_\epsilon\bigg(x^{1/3+\epsilon}\sum_{\substack{d_1,\ldots,d_{k_0}\in S_W\\ e_1,\ldots,e_{k_0}\in S_W}}|\lambda_{d_1,\ldots,d_{k_0}}\lambda_{e_1,\ldots,e_{k_0}}|\bigg).
\end{align}

Since $\lambda_{d_1,\ldots,d_{k_0}}=0$ unless $d_1\cdots d_{k_0}\leq R$, it is not difficult to see that the last remainder term in (\ref{tsum})
can be omitted.
Thus we only need to consider
\begin{align*}
M_1=\sum_{\substack{d_1,\ldots,d_{k_0}\\ e_1,\ldots,e_{k_0}\\
(d_{j_1}e_{j_1},d_{j_2}e_{j_2})=1}}\lambda_{d_1,\ldots,d_{k_0}}\lambda_{e_1,\ldots,e_{k_0}}\cdot
\frac{\tau([d_m,e_m])}{[d_m,e_m]}\cdot\frac{\phi(\prod_{j\neq
m}[d_j,e_j])}{\prod_{j\neq m}[d_j,e_j]^2},
\end{align*}
\begin{align*}
M_2=\sum_{\substack{d_1,\ldots,d_{k_0}\\ e_1,\ldots,e_{k_0}\\ (d_{j_1}e_{j_1},d_{j_2}e_{j_2})=1}}\lambda_{d_1,\ldots,d_{k_0}}\lambda_{e_1,\ldots,e_{k_0}}\cdot
\frac{\tau([d_m,e_m])}{[d_m,e_m]}\cdot\frac{\phi(\prod_{j\neq m}[d_j,e_j])}{\prod_{j\neq m}[d_j,e_j]^2}\cdot\log([d_m,e_m]),
\end{align*}
and
\begin{align*}
M_3=\sum_{\substack{d_1,\ldots,d_{k_0}\\ e_1,\ldots,e_{k_0}\\
(d_{j_1}e_{j_1},d_{j_2}e_{j_2})=1}}\lambda_{d_1,\ldots,d_{k_0}}\lambda_{e_1,\ldots,e_{k_0}}\cdot
\frac{\tau([d_m,e_m])}{[d_m,e_m]}\cdot\frac{\phi(\prod_{j\neq
m}[d_j,e_j])}{\prod_{j\neq m}[d_j,e_j]^2}\sum_{p\mid W\prod_{j\neq
m}[d_j,e_j]}\frac{\log p}{p-1}.
\end{align*}

Write
$$
f(t_1,\ldots,t_{k_0})=\int_{\R^{k_0}}\eta(\vec{u})\exp\bigg(-\sum_{j=1}^{k_0}(1+i u_j)t_j\bigg)d\vec{u},
$$
where the vector $\vec{u}=(u_1,\ldots,u_{k_0})$ and
\begin{equation}\label{etau}
\eta(\vec{u})\ll(1+|\vec{u}|)^{-A}
\end{equation}
for any fixed $A>0$.
Then
$$
M_1=\int_{\R^{k_0}}\int_{\R^{k_0}}\eta(\vec{u})\eta(\vec{v})H(\vec{u},\vec{v})d\vec{u}d\vec{v},
$$
where
\begin{align}
H(\vec{s}_1,\vec{s}_2)=&\sum_{\substack{d_1,\ldots,d_{k_0}\in S_W\\ e_1,\ldots,e_{k_0}\in S_W\\ (d_{j_1}e_{j_1},d_{j_2}e_{j_2})=1}}
\frac{\tau([d_m,e_m])}{[d_m,e_m]}\cdot\frac{\phi(\prod_{i\neq m}[d_i,e_i])}{\prod_{j\neq m}[d_j,e_j]^2}\prod_{j=1}^{k_0}\mu(d_j)\mu(e_j)d_j^{-\frac{1+i u_j}{\log R}}e_j^{-\frac{1+i v_j}{\log R}}\notag\\
=&\prod_{p>w}\bigg(1-\sum_{j\neq m}\bigg(\frac{p-1}{p^{2+\frac{1+i u_j}{\log R}}}+\frac{p-1}{p^{2+\frac{1+i v_j}{\log R}}}-\frac{p-1}{p^{2+\frac{1+i u_j}{\log R}+\frac{1+i v_j}{\log R}}}\bigg)\notag\\
&-
\bigg(\frac{2}{p^{1+\frac{1+i u_m}{\log R}}}+\frac{2}{p^{1+\frac{1+i v_m}{\log R}}}-\frac{2}{p^{1+\frac{1+i u_m}{\log R}+\frac{1+i v_m}{\log R}}}\bigg)\bigg).
\end{align}
Since
$$
H(\vec{s}_1,\vec{s}_2)\ll\prod_{p>w}(1+O(p^{-1-1/\log R}))=(\log R)^{O(1)},
$$
in view of (\ref{etau}), we may assume that $|\vec{u}|,|\vec{v}|\leq\sqrt{\log R}$. Then
\begin{align*}
H(\vec{s}_1,\vec{s}_2)=&(1+o(1))\prod_{p>w}\frac{\Big(1-\frac{1}{p^{1+\frac{1+i u_m}{\log R}}}\Big)^2\Big(1-\frac{1}{p^{1+\frac{1+i v_m}{\log R}}}\Big)^2}{\Big(1-\frac{1}{p^{1+\frac{1+i u_m}{\log R}+\frac{1+i v_m}{\log R}}}\Big)^2}\prod_{j\neq m}\frac{\Big(1-\frac{1}{p^{1+\frac{1+i u_j}{\log R}}}\Big)\Big(1-\frac{1}{p^{1+\frac{1+i v_j}{\log R}}}\Big)}{\Big(1-\frac{1}{p^{1+\frac{1+i u_j}{\log R}+\frac{1+i v_j}{\log R}}}\Big)}
\\
=&(1+o(1))\frac{\zeta\big(1+\frac{1+i u_m}{\log R}+\frac{1+i v_m}{\log R}\big)^2\prod_{j\neq m}\zeta\big(1+\frac{1+i u_j}{\log R}+\frac{1+i v_j}{\log R}\big)}{\zeta\big(1+\frac{1+i u_m}{\log R}\big)^2\zeta\big(1+\frac{1+i v_m}{\log R}\big)^2\prod_{j\neq m}\zeta\big(1+\frac{1+i u_j}{\log R}\big)\zeta\big(1+\frac{1+i v_j}{\log R}\big)}\\
&\cdot\prod_{p\leq w}\frac{\Big(1-\frac{1}{p^{1+\frac{1+i u_m}{\log R}+\frac{1+i v_m}{\log R}}}\Big)^2}{\Big(1-\frac{1}{p^{1+\frac{1+i u_m}{\log R}}}\Big)^2\Big(1-\frac{1}{p^{1+\frac{1+i v_m}{\log R}}}\Big)^2}\prod_{j\neq m}\frac{\Big(1-\frac{1}{p^{1+\frac{1+i u_j}{\log R}+\frac{1+i v_j}{\log R}}}\Big)}{\Big(1-\frac{1}{p^{1+\frac{1+i u_j}{\log R}}}\Big)\Big(1-\frac{1}{p^{1+\frac{1+i v_j}{\log R}}}\Big)}
\\
=&(1+o(1))\frac{\big(\frac{1+i u_m}{\log R}\big)^2\big(\frac{1+i v_m}{\log R}\big)^2\prod_{j\neq m}\big(\frac{1+i u_j}{\log R}\big)\big(\frac{1+i v_j}{\log R}\big)}{\big(\frac{1+i u_m}{\log R}+\frac{1+i v_m}{\log R}\big)^2\prod_{j\neq m}\big(\frac{1+i u_j}{\log R}+\frac{1+i v_j}{\log R}\big)}
\cdot\frac{W^{k_0+1}}{\phi(W)^{k_0+1}},
\end{align*}
where $\zeta(s)$ is the Riemann zeta function and we used the fact
$$
\zeta(1+s)^{-1}=s+o(s)
$$
as $s\to 0$.
Hence
$$
M_1=\frac{(1+o(1))W^{k_0+1}}{\Phi(W)^{k_0+1}(\log R)^{k_0+1}}\iint_{\R^{k_0}\times\R^{k_0}}\eta(\vec{u})\eta(\vec{v})L_{1}(\vec{u},\vec{v})d\vec{u}d\vec{v},
$$
where
$$
L_{1}(\vec{u},\vec{v})=\frac{(1+i u_m)^2(1+i v_m)^2}{(1+i u_m+1+i v_m)^2}\prod_{j\neq m}\frac{(1+i u_j)(1+i v_j)}{1+i u_j+1+i v_j}.
$$
Clearly
$$
\frac{\partial^{k+1}f(t_1,\ldots,t_{k_0})}{\partial t_1\cdots\partial^2 t_m\cdots\partial t_{k_0}}=(-1)^{k+1}\int_{\R^{k_0}}\eta(\vec{u})\exp\bigg(-\sum_{j=1}^{k_0}(1+i u_j)t_j\bigg)\cdot(1+i u_m)^2\prod_{j\neq m}(1+i u_j)d\vec{u},
$$
i.e.,
\begin{align*}
G_m(t_1,\ldots,t_{k_0})^2 =
\iint_{\R^{k_0}\times\R^{k_0}}\eta(\vec{u})\eta(\vec{v})K_1(\vec{u},\vec{v})d\vec{u}d\vec{v},
\end{align*}
where
$$
K_1(\vec{u},\vec{v})=\exp\bigg(-\sum_{j=1}^{k_0}(1+i u_j+1+i v_j)t_j\bigg)\cdot(1+i u_m)^2(1+i v_m)^2\prod_{j\neq m}(1+i u_j)(1+i v_j).
$$
Then by Fubini's theorem,
we have
\begin{align*}
&\int_{\Delta_{k_0,\infty}}\bigg(\int_{t_m}^{+\infty}G_m(t_1,\ldots,t_m',\ldots,t_{k_0})^2 d t_m'\bigg)d t_1\cdots d t_{k_0}\\
=&\int_{\Delta_{k_0,\infty}}
\bigg(\iint_{\R^{k_0}\times\R^{k_0}}\frac{K_1(\vec{u},\vec{v})d\vec{u}d\vec{v}}{1+i u_m+1+i v_m}\bigg)d t_1\cdots d t_{k_0}\\
=&\iint_{\R^{k_0}\times\R^{k_0}}\frac{\eta(\vec{u})\eta(\vec{v})K_1(\vec{u},\vec{v})d\vec{u}d\vec{v}}{(1+i u_m+1+i v_m)^2\prod_{j\neq m}(1+i u_j+1+i v_j)}\\
=&\iint_{\R^{k_0}\times\R^{k_0}}\eta(\vec{u})\eta(\vec{v})L_1(\vec{u},\vec{v})d\vec{u}d\vec{v}.
\end{align*}
Finally, since $G_m(t_1,\ldots,t_{k_0})$ is also supported on $\Delta_{k_0,1}$, clearly
\begin{align*}
&\int_{\Delta_{k_0,\infty}}\bigg(\int_{t_m}^{+\infty}G_m(t_1,\ldots,t_m',\ldots,t_{k_0})^2 d t_m'\bigg)d t_1\cdots dt_m\cdots d t_{k_0}\\
=&\int_{\Delta_{k_0,1}} t_{m}'G_m(t_1,\ldots,t_m',\ldots,t_{k_0})^2 d t_1\cdots d t_m'\cdots d t_{k_0}.
\end{align*}

Now let us turn to $M_2$. Similarly, we have
$$
M_2=\iint_{\R^{k_0}\times\R^{k_0}}\eta(\vec{u})\eta(\vec{v})H^*(\vec{u},\vec{v})d\vec{u}d\vec{v},
$$
where
$$
H^*(\vec{s}_1,\vec{s}_2)=\sum_{\substack{d_1,\ldots,d_{k_0}\in S_W\\ e_1,\ldots,e_{k_0}\in S_W\\ (d_{j_1}e_{j_1},d_{j_2}e_{j_2})=1}}\log([d_m,e_m])\cdot
\frac{\tau([d_m,e_m])}{[d_m,e_m]}\prod_{j\neq m}\frac{\phi([d_j,e_j])}{[d_j,e_j]^2}\prod_{j=1}^{k_0}\frac{\mu(d_j)\mu(e_j)}{d_j^{\frac{1+i u_j}{\log R}}e_j^{\frac{1+i v_j}{\log R}}}.
$$
Clearly
\begin{align*}
H^*(\vec{s}_1,\vec{s}_2)
=&\sum_{q>w}\bigg(\frac{2\log q}{q^{1+\frac{1+i u_m}{\log R}+\frac{1+i v_m}{\log R}}}-\frac{2\log q}{q^{1+\frac{1+i u_m}{\log R}}}-\frac{2\log q}{q^{1+\frac{1+i v_m}{\log R}}}\bigg)\\
&\cdot\prod_{\substack{p>w\\ p\neq q}}\bigg(1-\sum_{j\neq m}\bigg(\frac{p-1}{p^{2+\frac{1+i u_j}{\log R}}}+\frac{p-1}{p^{2+\frac{1+i v_j}{\log R}}}-\frac{p-1}{p^{2+\frac{1+i u_j}{\log R}+\frac{1+i v_j}{\log R}}}\bigg)\\
&-
\bigg(\frac{2}{p^{1+\frac{1+i u_m}{\log R}}}+\frac{2}{p^{1+\frac{1+i v_m}{\log R}}}-\frac{2}{p^{1+\frac{1+i u_m}{\log R}+\frac{1+i v_m}{\log R}}}\bigg)\bigg)\\
=&\sum_{q>w}\bigg(\frac{2\log q}{q^{1+\frac{1+i u_m}{\log R}+\frac{1+i v_m}{\log R}}}-\frac{2\log q}{q^{1+\frac{1+i u_m}{\log R}}}-\frac{2\log q}{q^{1+\frac{1+i v_m}{\log R}}}\bigg)\cdot\frac{H(\vec{s}_1,\vec{s}_2)}{1-O(q^{-1})}\\
=&(1+o(1))H(\vec{s}_1,\vec{s}_2)\sum_{q>w}\bigg(\frac{2\log q}{q^{1+\frac{1+i u_m}{\log R}+\frac{1+i v_m}{\log R}}}-\frac{2\log q}{q^{1+\frac{1+i u_m}{\log R}}}-\frac{2\log q}{q^{1+\frac{1+i v_m}{\log R}}}\bigg).
\end{align*}
Note that
$$
\sum_{q>w}\frac{\log q}{q^{1+s}}=\sum_{n=1}^\infty\frac{\Lambda(n)}{n^{1+s}}+O((\log w)^2)=-\frac{\zeta'(1+s)}{\zeta(1+s)}+O((\log w)^2)
$$
and
$$
-\frac{\zeta'(1+s)}{\zeta(1+s)}=s+o(s)
$$
as $s\to 0$.
So
$$
M_2=\frac{(2+o(1))W^{k_0+1}}{\Phi(W)^{k_0+1}(\log R)^{k_0}}\iint_{\R^{k_0}\times\R^{k_0}}\eta(\vec{u})\eta(\vec{v})(L_2(\vec{u},\vec{v})-L_3(\vec{u},\vec{v})-L_3(\vec{v},\vec{u}))d\vec{u}d\vec{v},
$$
where
$$
L_2(\vec{u},\vec{v})=\frac{(1+i u_m)^2(1+i v_m)^2}{(1+i u_m+1+i v_m)^3}\prod_{j\neq m}\frac{(1+i u_j)(1+i v_j)}{1+i u_j+1+i v_j}
$$
and
$$
L_3(\vec{u},\vec{v})=\frac{(1+i u_m)(1+i v_m)^2}{(1+i u_m+1+i v_m)^2}.
\prod_{j\neq m}\frac{(1+i u_j)(1+i v_j)}{1+i u_j+1+i v_j}.
$$

Clearly
\begin{align*}&\frac12\int_{\Delta_{k_0,1}} t_{m}''^2G_m(t_1,\ldots,t_m'',\ldots,t_{k_0})^2 d t_1\cdots d t_m''\cdots d t_{k_0}
\\=
&\int_{\Delta_{k_0,\infty}}\bigg(\int_{t_m}^{+\infty}\bigg(\int_{t_m'}^{+\infty}G_m(t_1,\ldots,t_m'',\ldots,t_{k_0})^2d t_m''\bigg) d t_m'\bigg)d t_1\cdots d t_{k_0}\\
=&\int_{\Delta_{k_0,\infty}}
\bigg(\iint_{\R^{k_0}\times\R^{k_0}}\frac{\eta(\vec{u})\eta(\vec{v})K_1(\vec{u},\vec{v})d\vec{u}d\vec{v}}{(1+i u_m+1+i v_m)^2}\bigg)d t_1\cdots d t_{k_0}\\
=&\iint_{\R^{k_0}\times\R^{k_0}}\eta(\vec{u})\eta(\vec{v})L_2(\vec{u},\vec{v})d\vec{u}d\vec{v}.
\end{align*}
Let
$$
K_2(\vec{u},\vec{v})=\exp\bigg(-\sum_{j=1}^{k_0}(1+i u_j+1+i v_j)t_j\bigg)(1+i u_m)(1+i v_m)^2\prod_{j\neq m}(1+i u_j)(1+i v_j).
$$
Then
$$
G_m(t_1,\ldots,t_{k_0})F(t_1,\ldots,t_{k_0})
=\iint_{\R^{k_0}\times\R^{k_0}}\eta(\vec{u})\eta(\vec{v})K_2(\vec{u},\vec{v})d\vec{u}d\vec{v}.
$$
Similarly, we also have
\begin{align*}&\int_{\Delta_{k_0,1}} \bigg(\int_{t_m}^\infty G_m(t_1,\ldots,t_m',\ldots,t_{k_0})F(t_1,\ldots,t_m',\ldots,t_{k_0})d t_m'\bigg) d t_1\cdots d t_{k_0}
\\
=&\int_{\Delta_{k_0,\infty}}
\bigg(\iint_{\R^{k_0}\times\R^{k_0}}\frac{K_2(\vec{u},\vec{v})d\vec{u}d\vec{v}}{1+i u_m+1+i v_m}\bigg)d t_1\cdots d t_{k_0}=\iint_{\R^{k_0}\times\R^{k_0}}L_3(\vec{u},\vec{v})d\vec{u}d\vec{v}.
\end{align*}

Finally, using the similar disscusions, it is not difficult to see that
$$
M_3=o\bigg(\frac{W^{k_0+1}}{\phi(W)^{k_0+1}}\cdot\frac{1}{(\log R)^{k_0}}\bigg).
$$
All are done. \end{proof}

\section{Proof of Theorem \ref{mt}}
\setcounter{lemma}{0}\setcounter{theorem}{0}\setcounter{corollary}{0}
\setcounter{equation}{0}

\begin{lemma}\label{ad}
For $k_0\geq 1$, there exist $h_1<h_2<\cdots<h_{2k_0}$ such that $h_{2j}=h_{2j-1}+2$ for $1\leq j\leq k_0$,
$$
\{h_1,h_2,\ldots,h_{2k_0-1},h_{2k_0}\}
$$
is admissible and
$$
h_{2k_0}-h_1=O(k_0(\log k_0)^2).
$$
\end{lemma}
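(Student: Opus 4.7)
The plan is to realize each pair as $(h_{2j-1},h_{2j})=(s_j,s_j+2)$, drawing the $s_j$ from a single short window and from a single residue class system mod the small primes that automatically forces admissibility. A Mertens-type density calculation will then supply the $(\log k_0)^2$ factor in the diameter. The bulk of the work is combinatorial (identifying the correct sieve conditions plus a pigeonhole step); the only nontrivial analytic input is the Mertens estimate.

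I would introduce the modulus $P_0=\prod_{p\leq 2k_0}p$ and the set
$$
\mathcal{R}=\bigl\{r\in\Z/P_0\Z:\ r\not\equiv 0\pmod{p}\text{ and }r\not\equiv -2\pmod{p}\text{ for every prime }p\leq 2k_0\bigr\}.
$$
By the Chinese Remainder Theorem, the number of allowed residues is $1$ at $p=2$ (since $-2\equiv 0\pmod 2$), $1$ at $p=3$, and $p-2$ at each $p\geq 5$, whence
$$
\rho_0:=\frac{|\mathcal{R}|}{P_0}=\frac{1}{6}\prod_{5\leq p\leq 2k_0}\Bigl(1-\frac{2}{p}\Bigr)\gg\frac{1}{(\log k_0)^2},
$$
where the lower bound uses the Mertens-type estimate $\prod_{p\leq x}(1-2/p)\asymp(\log x)^{-2}$, obtained by comparing with $\prod_{p\leq x}(1-1/p)^2$ (the ratio of the two products is an absolutely convergent Euler product).

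Next, setting $N:=\lceil 2k_0/\rho_0\rceil=O(k_0(\log k_0)^2)$, I would partition $[1,P_0]$ into disjoint subintervals of length $N$ and invoke pigeonhole. Since the total count of $s\in[1,P_0]$ with $s\bmod P_0\in\mathcal{R}$ is exactly $|\mathcal{R}|=\rho_0 P_0$, some subinterval contains $\geq \rho_0 N\geq k_0$ such integers. Let $s_1<s_2<\cdots<s_{k_0}$ be the smallest $k_0$ of them; translating (which preserves admissibility) I may assume $s_1=1$, and I set $h_{2j-1}:=s_j$ and $h_{2j}:=s_j+2$.

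It remains to verify the three properties. For admissibility: if $p\leq 2k_0$ then no $s_j$ is $\equiv 0$ or $-2\pmod p$, so the whole tuple $\{s_j,s_j+2\}$ avoids the residue class $0\pmod p$; if $p>2k_0$ the tuple has only $2k_0<p$ elements and so misses some residue automatically. The strict ordering $h_1<\cdots<h_{2k_0}$ together with $h_{2j}-h_{2j-1}=2$ follows from the observation that the conditions at $p=2,3$ force $r\equiv 5\pmod 6$ for every $r\in\mathcal{R}$, hence $s_{j+1}-s_j\geq 6>2$, so the pairs are disjoint. Finally the diameter is $h_{2k_0}-h_1=s_{k_0}+2-s_1\leq N+2=O(k_0(\log k_0)^2)$. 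The main (and essentially routine) obstacle is the Mertens-type density bound; everything else is elementary CRT and pigeonhole.
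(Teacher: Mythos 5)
Your argument is correct and reaches the same bound, but by a genuinely different and more elementary route. The paper works in a window $[z,2z]$ with $z=Ck_0^9$ and defines $S$ to be the set of $q$ in that window for which $q(q+2)$ has no prime factor $\leq 2k_0$; it then invokes the Jurkat--Richert sieve theorem to get $|S|\gg z/(\log z)^2$, and pigeonholes $[z,2z]$ into subintervals of length $L\asymp z k_0/|S|\asymp k_0(\log k_0)^2$ to find $k_0$ elements of $S$ in one of them. You impose the identical local condition $s\not\equiv 0,-2\pmod p$ for $p\leq 2k_0$, but instead of sieving in a short window you count over a full period modulo $P_0=\prod_{p\leq 2k_0}p$, where the count is exact by the Chinese Remainder Theorem, and then lower-bound the density $\rho_0=\frac16\prod_{5\leq p\leq 2k_0}(1-2/p)\gg(\log k_0)^{-2}$ by Mertens. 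The pigeonhole over subintervals of length $N=\lceil 2k_0/\rho_0\rceil=O(k_0(\log k_0)^2)$ then plays exactly the role of the paper's pigeonhole over $[z,2z]$. The trade-off is clear: the paper's route needs a sieve theorem but works in a polynomially sized window $z\asymp k_0^9$, while yours needs only Mertens but works in the exponentially long period $P_0=e^{(2+o(1))k_0}$; since the lemma only asks for existence of a bounded-diameter admissible tuple, the large period costs nothing. Two small remarks: the translation to $s_1=1$ is harmless but unnecessary, and when $N\nmid P_0$ the pigeonhole should allow one short leftover block --- this loses at most an $O(N/P_0)$ fraction, which is negligible since $N\ll P_0$. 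Your observation that all elements of $\mathcal R$ lie in $5\pmod 6$, forcing gaps $\geq 6$ and hence disjointness of the pairs $(s_j,s_j+2)$, is the same separation fact the paper uses implicitly, and it is good that you made it explicit.
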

\begin{proof}
Let $z=Ck_0^9$ where $C$ is a sufficiently large constant. Let
$$
S=\{q\in[z,2z]:\, \text{all prime divisors of }q(q+2)\text{ are greater than }2k_0\}.
$$
By the Jurkat-Richert theorem (cf. \cite[Theorem 8.4]{HR}), we know that
$$
\frac{z}{(\log z)^2}\ll|S|\ll\frac{z}{(\log z)^2}.
$$
Thus there exists $n\in[z-L,2z]$ such that
$$
|[n,n+L]\cap S|\geq k_0,
$$
where $L=2zk_0/|S|$.
Choose
$$
h_1,h_3,\ldots,h_{2k_0-1}\in [n,n+L]\cap S
$$
and let $h_{2j}=h_{2j-1}+2$. We are done.
\end{proof}

Suppose that $x$ is sufficiently large and let $R=x^{1/4-1/(1000m)}.$
Define
$$
\theta(n)=\begin{cases}\log n,&\text{if }n\text{ is prime},\\ 0,&\text{otherwise.}\end{cases}
$$
Let
\begin{equation}\label{C1}
k_0=m^2e^{8m+8}.
\end{equation}
Suppose that $\{h_1,\ldots,h_{2k_0}\}$ is an admissible set described in Lemma \ref{ad} and $f(t_1,\ldots,t_{2k_0})$ is a smooth function supported on $\Delta_{2k_0,1}$.
We need to show that the sum
\begin{align}\label{C2}
\sum_{\substack{x\leq n<2x\\ n\equiv b\pmod{W}\\ \mu(n+h_{2j})\neq0,\,1\leq j\leq k}}\bigg(\sum_{j=1}^{k_0}\theta(n+h_{2j-1})\bigg(1-\frac{\tau(n+h_{2j})}{C_2}\bigg)-m\log(3x)\bigg)
\bigg(\sum_{d_j\mid n+h_j}\lambda_{d_1,\ldots,d_{2k_0}}\bigg)^2
\end{align}
is positive, where
$$
\lambda_{d_1,\ldots,d_{2k_0}}=f\bigg(\frac{\log d_1}{\log R},\ldots,\frac{\log d_{2k_0}}{\log R}\bigg)\prod_{j=1}^{2k_0}\mu(d_j)
$$
and $C_2$ is a constant depending on $k_0$ to be chosen later. Then
there exist distinct $1\leq j_1,\ldots,j_{m+1}\leq k_0$ such that
$$
\theta(n+h_{2j_i-1})\bigg(1-\frac{\tau(n+h_{2j_i})}{C_2}\bigg)>0,
$$
i.e., $n+h_{2j_i-1}$ is prime and $\tau(n+h_{2j_i})<C_2.$ Since
$\mu(n+h_{2j_i})\neq0$, we get that
$$\Omega(n+h_{2j_i})=\Omega(n+h_{2j_i-1}+2)\leq \frac{\log C_2}{\log
2}.$$

According to \cite[Lemma 4.1]{P2}, we can get
\begin{align}\label{tss}
&\sum_{j=1}^{k_0}\sum_{\substack{x\leq n<2x\\ n\equiv b\pmod{W}}}\theta(n+h_{2j-1})\bigg(\sum_{d_j\mid n+h_j}f\bigg(\frac{\log d_1}{\log R},\ldots,\frac{\log d_{2k_0}}{\log R}\bigg)\prod_{j=1}^{2k_0}\mu(d_j)\bigg)^2\notag\\
=&\frac{(k_0+o(1))x}{(\log R)^{2k_0-1}}\cdot\frac{W^{2k_0-1}}{\phi(W)^{2k_0}}\int_{\Delta_{2k_0-1,1}}\bigg(\int_0^1F(t_1,\ldots,t_{2k_0})d t_{2k_0}\bigg)^2dt_1\cdots d t_{2k_0-1}
\end{align}
and
\begin{align}\label{ss}
&\log(3x)\sum_{\substack{x\leq n<2x\\ n\equiv b\pmod{W}}}\bigg(\sum_{d_j\mid n+h_j}f\bigg(\frac{\log d_1}{\log R},\ldots,\frac{\log d_{2k_0}}{\log R}\bigg)\prod_{j=1}^{2k_0}\mu(d_j)\bigg)^2\notag\\
=&\frac{(1+o(1))x\log x}{(\log R)^{2k_0}}\cdot\frac{W^{2k_0-1}}{\phi(W)^{2k_0}}\int_{\Delta_{2k_0,1}}F(t_1,\ldots,t_{2k_0})^2d t_1\cdots d t_{2k_0},
\end{align}
where
$$
F(t_1,\ldots,t_{2k_0})=\frac{\partial^{2k_0}f(t_1,\ldots,t_{2k_0})}{\partial
t_1\cdots\partial t_{2k_0}}.
$$

Clearly for any $1\leq j_0\leq 2k_0$ and prime $p\in(w,x^{1/2}]$,
\begin{align*}
&\sum_{\substack{x\leq n<2x\\ n\equiv b\pmod{W}\\ n+h_{2j_0}\equiv 0\pmod{p^2}}}\bigg(\sum_{d_j\mid n+h_j}\lambda_{d_1,\ldots,d_{2k_0}}\bigg)^2\\
\leq&2\sum_{\substack{x\leq n<2x\\ n\equiv b\pmod{W}\\ n+h_{2j_0}\equiv 0\pmod{p^2}}}\bigg(\sum_{\substack{d_j\mid n+h_j\\ p\nmid d_{2j_0}}}\lambda_{d_1,\ldots,d_{2k_0}}\bigg)^2+2\sum_{\substack{x\leq n<2x\\ n\equiv b\pmod{W}\\ n+h_{2j_0}\equiv 0\pmod{p^2}}}\bigg(\sum_{\substack{d_j\mid n+h_j\\ p\mid d_{2j_0}}}\lambda_{d_1,\ldots,d_{2k_0}}\bigg)^2.
\end{align*}
According to the Maynard sieve method, we have
\begin{align*}
&\sum_{\substack{x\leq n<2x\\ n\equiv b\pmod{W}\\ n+h_{2j_0}\equiv 0\pmod{p^2}}}\bigg(\sum_{\substack{d_j\mid n+h_j\\ p\nmid d_{2j_0}}}\lambda_{d_1,\ldots,d_{2k_0}}\bigg)^2\\
=&\frac{(1+o(1))x}{(\log R)^{2k_0}}\cdot\frac{(p^2W)^{2k_0-1}}{\phi(p^2W)^{2k_0}}\int_{\Delta_{2k_0},1}F(t_1,\ldots,t_{2k_0})^2 d t_1\cdots d t_{2k_0}+O(x^\epsilon)
\end{align*}
and
\begin{align*}
&\sum_{\substack{x\leq n<2x\\ n\equiv b\pmod{W}\\ n+h_{2j_0}\equiv 0\pmod{p^2}}}\bigg(\sum_{\substack{d_j\mid n+h_j\\ p\mid d_{2j_0}}}\lambda_{d_1,\ldots,d_{2k_0}}\bigg)^2\\
=&\frac{(1+o(1))x}{(\log R)^{2k_0}}\cdot\frac{(p^2W)^{2k_0-1}}{\phi(p^2W)^{2k_0}}\int_{\Delta_{2k_0},1}F\bigg(t_1,\ldots,t_{2j_0}+\frac{\log p}{\log R},\ldots,t_{2k_0}\bigg)^2 d t_1\cdots d t_{2k_0}+O(x^\epsilon).
\end{align*}
Hence
\begin{align*}
&\sum_{j=1}^{k_0}\sum_{\substack{x\leq n<2x\\ n\equiv b\pmod{W}\\ \mu(n+h_{2j'})=0\text{ for some }j'}}\theta(n+h_{2j-1})\bigg(\sum_{d_j\mid n+h_j}\lambda_{d_1,\ldots,d_{2k_0}}\bigg)^2\\
\leq&\log(2x)\sum_{1\leq j,j'\leq k_0}\sum_{p>w}\sum_{\substack{x\leq n<2x\\ n\equiv b\pmod{W}\\ n+h_{2j'}\equiv 0\pmod{p^2}}}\bigg(\sum_{d_j\mid n+h_j}\lambda_{d_1,\ldots,d_{2k_0}}\bigg)^2\\
\ll&\frac{x\log x}{(\log R)^{2k_0}}\cdot\frac{W^{2k_0-1}}{\phi(W)^{2k_0}}\cdot\sum_{p>w}\frac{1}{p^2}.
\end{align*}
Since $w$ tends to infinity as $x\to\infty$, we have
\begin{align}\label{tm}
&\sum_{\substack{x\leq n<2x\\ n\equiv b\pmod{W}\\ \mu(n+h_{2j})\neq0,\,1\leq j\leq k}}\theta(n+h_{2j-1})\bigg(\sum_{d_j\mid n+h_j}\lambda_{d_1,\ldots,d_{2k_0}}\bigg)^2\notag\\
=&(1+o(1))\sum_{\substack{x\leq n<2x\\ n\equiv b\pmod{W}}}\theta(n+h_{2j-1})\bigg(\sum_{d_j\mid n+h_j}\lambda_{d_1,\ldots,d_{2k_0}}\bigg)^2.
\end{align}

Now we shall construct the function $F(t_1,\ldots,t_{2k_0})$ and apply Theorem \ref{tt} to compute
$$
\sum_{\substack{x\leq n<2x\\ n\equiv b\pmod{W}}}\sum_{j=1}^{k_0}\tau(n+h_{2j})
\bigg(\sum_{d_j\mid n+h_j}\lambda_{d_1,\ldots,d_{2k_0}}\bigg)^2.
$$
From now on we only consider $j=k_0$.
Let \begin{equation}\label{d1}\delta_1=\frac{1}{4.5k_0\log k_0}.\end{equation}
Let $h_1(t_1,\ldots,t_{2k_0})$ be a smooth function with $|h_1(t_1,\ldots,t_{2k_0})|\leq 1$ such that
$$
h_1(t_1,\ldots,t_{2k_0})=\begin{cases}1,\qquad&\text{if }(t_1,\ldots,t_{2k_0})\in\Delta_{2k_0,1-\delta_1},\\
0,&\text{if }(t_1,\ldots,t_{2k_0})\not\in\Delta_{2k_0,1}.
\end{cases}
$$
Furthermore, we may assume that
$$
\bigg|\frac{\partial h_1}{\partial t_j}(t_1,\ldots,t_{2k_0})\bigg|\leq\frac{1}{\delta_1}+1
$$
for each $(t_1,\ldots,t_{2k_0})\in \Delta_{2k_0,1}\setminus
\Delta_{2k_0,1-\delta_1}$ and $1\leq j\leq 2k_0$.

Let
$$
A=\log(2k_0)-2\log\log(2k_0)
$$
and $$T=\frac{e^A-1}{A}.$$ It is easy to verify
$$A>0.69\log k_0.$$ Let
$$
\delta_2=\frac{\delta_1 T}{10}.
$$
We also have
\begin{equation}\label{d2}
\delta_2\geq \frac{\log k_0}{2k_0}.
\end{equation}
Let $h_2(t)$ be a smooth function with $|h_2(t)|\leq 1$ such that
$$
h_2(t)=\begin{cases}1,&\text{if }\delta_3\leq t\leq T-\delta_2,\\
0,&\text{if }t>T\text{ or }t<0,
\end{cases}
$$
where $\delta_3>0$ is a small constant to be chosen soon.
Furthermore, we may assume that
$$
|h_2'(t)|\leq\frac{1}{\delta_2}+1
$$
for $t\in[T-\delta_2,T]$ and
$$
|h_2'(t)|\leq\frac{1}{\delta_3}+1
$$
for $t\in[0,\delta_3]$

Let
$$
F(t_1,\ldots,t_{2k_0})=h_1(t_1,\ldots,t_{2k_0})\prod_{j=1}^{2k_0}\frac{h_2(2k_0t_j)}{1+2k_0At_j}
$$
and
$$
\gamma=\frac{1}{A}\bigg(1-\frac1{1+AT}\bigg).
$$
Since
$$
1-\frac{A}{e^A-1}-\frac{e^A}{2k_0}>0,
$$
according to the discussions of
Maynard \cite{M2}, we have
\begin{equation}\label{Fci1}
\int_{\Delta_{2k_0,1}}F^{\circ}(t_1,\ldots,t_{2k_0})^2d t_1\cdots d t_{2k_0}\leq \frac{\gamma^{2k_0}}{(2k_0)^{2k_0}}
\end{equation}
and
\begin{align}\label{Fci2}
&\int_{\Delta_{2k_0-1,1}}\bigg(\int_0^\infty F^{\circ}(t_1,\ldots,t_{2k_0})d t_{2k_0}\bigg)^2d t_1\cdots d t_{2k_0-1}\notag\\
\geq&\frac{\log(2k_0)-2\log\log(2k_0)-2}{2k_0}\cdot \frac{\gamma^{2k_0}}{(2k_0)^{2k_0}},
\end{align}
where
$$
F^{\circ}(t_1,\ldots,t_{2k_0})=\prod_{j=1}^{2k_0}\frac{\1_{[0,T]}(2k_0t_j)}{1+2k_0At_j}.
$$

Define
$$
h_2^*(t)=\begin{cases} 1,&\text{if }0\leq t\leq T-\delta_2,\\
h_2(t),&\text{otherwise},
\end{cases}
$$
and
$$
F^*(t_1,\ldots,t_{2k_0})=h_1(t_1,\ldots,t_{2k_0})\prod_{j=1}^{2k_0}\frac{h_2^*(2k_0t_j)}{1+2k_0At_j}.
$$
Clearly $$F^*(t_1,\ldots,t_{2k_0})=F(t_1,\ldots,t_{2k_0})$$ unless $0\leq t_j\leq \delta_3$ for some $j$.
Now we may choose $\delta_3>0$ sufficiently small such that
\begin{align}\label{fs}&\int_{\Delta_{2k_0-1,1}}\bigg(\int_0^\infty F(t_1,\ldots,t_{2k_0-1},t_{2k_0})d t_{2k_0}\bigg)^2d t_1\cdots d t_{2k_0-1}\notag\\
\geq&
(1-\delta_1)^{0.001}\int_{\Delta_{2k_0-1,1}}\bigg(\int_0^\infty F^*(t_1,\ldots,t_{2k_0-1},t_{2k_0})d t_{2k_0}\bigg)^2d t_1\cdots d t_{2k_0-1}.
\end{align}
Consider
$$
\Delta_{2k_0,r,s}=\{(t_1,\ldots,t_{2k_0},t_{2k_0}'):\,t_1,\ldots,t_{2k_0},t_{2k_0}'\in[0,s],\ t_1+\cdots+t_{2k_0},t_1+\cdots+t_{2k_0}'\leq r\}.
$$
It is easy to see that
$$
\frac{\vol(\Delta_{2k_0,1-\delta_1,(T-\delta_2)/(2k_0)})}{\vol(\Delta_{2k_0,1,T/(2k_0)})}\geq(1-\delta_1)^{2k_0+1}(1-\delta_2T^{-1})^{2k_0+1}\geq
1-2.24k_0\delta_1,
$$
where $\vol(\Delta)$ denotes the volume of $\Delta$.
We have $$F^{*}(t_1,\ldots,t_{2k_0})=F^{\circ}(t_1,\ldots,t_{2k_0})$$
provided $(t_1,\ldots,t_{2k_0})\in \Delta_{2k_0,1-\delta_1}$ and $t_1,\ldots,t_{2k_0}\in[0,(T-\delta_2)/(2k_0)]$.
Also note that $F^{\circ}(t_1,\ldots,t_{2k_0})$ is decreasing in those $t_j$. Hence we have
\begin{align}\label{fj}
&\int_{\Delta_{2k_0-1,1}}\bigg(\int_0^\infty F^*(t_1,\ldots,t_{2k_0-1},t_{2k_0})d t_{2k_0}\bigg)^2d t_1\cdots d t_{2k_0-1}\notag\\
\geq&\int_{\Delta_{2k_0,1-\delta_1,(T-\delta_2)/(2k_0)}}F^\circ(t_1,\ldots,t_{2k_0-1},t_{2k_0})F^\circ(t_1,\ldots,t_{2k_0-1},t_{2k_0}')d t_1\cdots d t_{2k_0}d t_{2k_0}'\notag\\
\geq&(1-2.24k_0\delta_1)\int_{\Delta_{2k_0,1,T/(2k_0)}}F^\circ(t_1,\ldots,t_{2k_0-1},t_{2k_0})F^\circ(t_1,\ldots,t_{2k_0-1},t_{2k_0}')d t_1\cdots d t_{2k_0}d t_{2k_0}'\notag\\
=&(1-2.24k_0\delta_1)\int_{\Delta_{2k_0-1,1}}\bigg(\int_0^1F^\circ(t_1,\ldots,t_{2k_0})d t_{2k_0}\bigg)^2dt_1\cdots d t_{2k_0-1}.
\end{align}
It is easy to check that
$$
1-2.25k_0\delta_1= 1-\frac{2.25k_0}{4.5k_0\log
k_0}\geq\frac{\log(2k_0)-2\log\log(2k_0)-2.5}{\log(2k_0)-2\log\log(2k_0)-2}.
$$
Thus by (\ref{fs}) and (\ref{fj}),
\begin{align*}
&\int_{\Delta_{2k_0-1,1}}\bigg(\int_0^1F(t_1,\ldots,t_{2k_0})d t_{2k_0}\bigg)^2dt_1\cdots d t_{2k_0-1}\\
\geq&(1-2.25k_0\delta_1)\int_{\Delta_{2k_0-1,1}}\bigg(\int_0^1F^\circ(t_1,\ldots,t_{2k_0})d t_{2k_0}\bigg)^2dt_1\cdots d t_{2k_0-1}\\
\geq&\frac{\log(2k_0)-2\log\log(2k_0)-2.5}{2k_0}\cdot\frac{\gamma^{2k_0}}{(2k_0)^{2k_0}}.
\end{align*}
Using (\ref{tss}), (\ref{tm}), (\ref{Fci2}) and noting that
$$
\log(2k_0)-2\log\log(2k_0)\geq 8m+3,
$$
we get
\begin{align}\label{tsse}
&\sum_{j=1}^{k_0}\sum_{\substack{x\leq n<2x\\ n\equiv b\pmod{W}\\ \mu(n+h_{2j})\neq0,\,1\leq j\leq k}}\theta(n+h_{2j-1})\bigg(\sum_{d_j\mid n+h_j}\lambda_{d_1,\ldots,d_{2k_0}}\bigg)^2\notag\\
\geq&\frac{(k_0+o(1))x}{(\log R)^{2k_0-1}}\cdot\frac{W^{2k_0-1}}{\phi(W)^{2k_0}}\cdot\frac{8m+0.5}{2k_0}\cdot\frac{\gamma^{2k_0}}{(2k_0)^{2k_0}}.
\end{align}
Furthermore, in view of (\ref{ss}) and (\ref{Fci1}), we also have
\begin{align}\label{sse}
&\log(3x)\sum_{\substack{x\leq n<2x\\ n\equiv b\pmod{W}}}\bigg(\sum_{d_j\mid n+h_j}f\bigg(\frac{\log d_1}{\log R},\ldots,\frac{\log d_{2k_0}}{\log R}\bigg)\prod_{j=1}^{2k_0}\mu(d_j)\bigg)^2\notag\\
\leq&\frac{(1+o(1))x\log x}{(\log R)^{2k_0}}\cdot\frac{W^{2k_0-1}}{\phi(W)^{2k_0}}\int_{\Delta_{2k_0,1}}F^\circ(t_1,\ldots,t_{2k_0})^2d t_1\cdots d t_{2k_0}\notag\\
\leq&\frac{(1+o(1))x\log x}{(\log R)^{2k_0}}\cdot\frac{W^{2k_0-1}}{\phi(W)^{2k_0}}\cdot
\frac{\gamma^{2k_0}}{(2k_0)^{2k_0}}
\end{align}

Let
\begin{align}
&G_{2k_0}(t_1,\ldots,t_{2k_0})=\frac{\partial F(t_1,\ldots,t_{2k_0})}{\partial t_{2k_0}}\notag\\
=&
\frac{\partial h_1(t_1,\ldots,t_{2k_0})}{\partial t_m}\prod_{j=1}^{2k_0}\frac{h_2(2k_0t_j)}{1+2k_0At_j}
+h_1(t_1,\ldots,t_{2k_0})\cdot\frac{2k_0h_2'(2k_0t_{2k_0})}{1+2k_0At_{2k_0}}\prod_{j=1}^{2k_0-1}\frac{h_2(2k_0t_j)}{1+2k_0At_j}\notag\\
&-
h_1(t_1,\ldots,t_{2k_0})\cdot\frac{2k_0Ah_2(2k_0t_{2k_0})}{(1+2k_0At_{2k_0})^2}\prod_{j=1}^{2k_0-1}\frac{h_2(2k_0t_j)}{1+2k_0At_j}.
\end{align}
By the Cauchy-Schwarz inequality,
\begin{align*}
&G_{2k_0}(t_1,\ldots,t_{2k_0})^2\leq3\bigg(\frac{\partial h_1}{\partial t_m}\prod_{j=1}^{2k_0}\frac{h_2(2k_0t_j)}{1+2k_0At_j}\bigg)^2\\
&
+3\bigg(h_1\cdot\frac{2k_0h_2'(2k_0t_{2k_0})}{1+2k_0At_{2k_0}}\prod_{j=1}^{2k_0-1}\frac{h_2(2k_0t_j)}{1+2k_0At_j}\bigg)^2+3\bigg(h_1\cdot\frac{2k_0Ah_2(2k_0t_{2k_0})}{(1+2k_0At_{2k_0})^2}\prod_{j=1}^{2k_0-1}\frac{h_2(2k_0t_j)}{1+2k_0At_j}\bigg)^2.
\end{align*}
Thus in view of (\ref{ta}),
\begin{align}
&\alpha=\int_{\Delta_{2k_0,1}}t_{2k_0}G_{2k_0}(t_1,\ldots,t_{2k_0-1},t_{2k_0})^2d t_1\cdots d t_{2k_0-1} d t_{2k_0}\notag
\\\leq&
\frac{3+7\delta_1}{\delta_1^2}\int_{\Delta_{2k_0,1}\setminus\Delta_{2k_0,1-\delta_1}}\frac{\1_{[1,T]}(2k_0t_{2k_0})t_{2k_0}}{(1+2k_0At_{2k_0})^2}\prod_{j=1}^{2k_0-1}\frac{\1_{[1,T]}(2k_0t_j)}{(1+2k_0At_j)^2}d
t_1\cdots d t_{2k_0-1}d t_{2k_0}\notag
\\&+\frac{3+7\delta_2}{\delta_2^2}\int_{\Delta_{2k_0-1,1}}\prod_{j=1}^{2k_0-1}\frac{\1_{[1,T]}(2k_0t_j)}{(1+2k_0At_j)^2}\bigg(\int_{(T-\delta_2)/(2k_0)}^{T/(2k_0)}\frac{4k_0^2t_{2k_0}d t_{2k_0}}{(1+2k_0At_{2k_0})^2}\bigg) d t_1\cdots d t_{2k_0-1}\notag
\\&+\frac{3+7\delta_3}{\delta_3^2}\int_{\Delta_{2k_0-1,1}}\prod_{j=1}^{2k_0-1}\frac{\1_{[1,T]}(2k_0t_j)}{(1+2k_0At_j)^2}\bigg(\int_{0}^{\delta_3/(2k_0)}\frac{4k_0^2t_{2k_0}d t_{2k_0}}{(1+2k_0At_{2k_0})^2}\bigg) d t_1\cdots d t_{2k_0-1}\notag\\
&+3\int_{\Delta_{2k_0,1}}\frac{4k_0^2A^2\1_{[1,T]}(2k_0t_{2k_0})t_{2k_0}}{(1+2k_0At_{2k_0})^4}\prod_{j=1}^{2k_0-1}\frac{\1_{[1,T]}(2k_0t_j)}{(1+2k_0At_j)^2}d t_1\cdots d t_{2k_0-1}d t_{2k_0}.
\end{align}
First, clearly
\begin{align}
&\int_{\Delta_{2k_0,1}}\frac{4k_0^2A^2\1_{[1,T]}(2k_0t_{2k_0})t_{2k_0}}{(1+2k_0At_{2k_0})^4}\prod_{j=1}^{2k_0-1}\frac{\1_{[1,T]}(2k_0t_j)}{(1+2k_0At_j)^2}d t_1\cdots d t_{2k_0}\notag\\
\leq&\int_{0}^\infty\frac{4k_0^2A^2t_{2k_0}d t_{2k_0}}{(1+2k_0At_{2k_0})^4}\cdot\bigg(\int_{0}^{T/{2k_0}}\frac{d t}{(1+2k_0At)^2}\bigg) ^{2k_0-1}\leq\frac{4k_0^2A^2}{6\cdot(2k_0A)^2}\cdot\frac{\gamma^{2k_0-1}}{(2k_0)^{2k_0-1}},
\end{align}
Next, we have
\begin{align}
&\int_{\Delta_{2k_0-1,1}}\prod_{j=1}^{2k_0-1}\frac{\1_{[1,T]}(2k_0t_j)}{(1+2k_0At_j)^2}\bigg(\int_{(T-\delta_2)/(2k_0)}^{T/(2k_0)}\frac{4k_0^2t_{2k_0}d t_{2k_0}}{(1+2k_0At_{2k_0})^2}\bigg) d t_1\cdots d t_{2k_0-1}\notag\\
\leq&\frac{\gamma^{2k_0-1}}{(2k_0)^{2k_0-1}}\cdot\frac{\delta_2}{2k_0}\cdot\frac{2k_0T}{(1+(T-\delta_2)A)^2}\leq \frac{\gamma^{2k_0-1}}{(2k_0)^{2k_0-1}}\cdot\frac{\delta_2}{2k_0}\cdot\frac{2k_0T}{(1+AT)AT}\notag\\
\leq&
\frac{\gamma^{2k_0-1}}{(2k_0)^{2k_0-1}}\cdot\frac{\delta_2}{2k_0}\cdot1.5\log
k_0,
\end{align}
by recalling that $A\geq 0.69\log k_0$ and
$1+AT=2k_0\cdot(\log(2k_0))^{-2}$.

Third,
\begin{align}
&\int_{\Delta_{2k_0-1,1}}\prod_{j=1}^{2k_0-1}\frac{\1_{[1,T]}(2k_0t_j)}{(1+2k_0At_j)^2}\bigg(\int_0^{\delta_3/(2k_0)}\frac{4k_0^2t_{2k_0}d t_{2k_0}}{(1+2k_0At_{2k_0})^2}\bigg) d t_1\cdots d t_{2k_0-1}\notag\\
\leq&\frac{\gamma^{2k_0-1}}{(2k_0)^{2k_0-1}}\int_0^{\delta_3/(2k_0)}4k_0^2t_{2k_0}d t_{2k_0}=\frac{\gamma^{2k_0-1}}{(2k_0)^{2k_0-1}}\cdot\frac{\delta_3^2}{2}.
\end{align}
Finally, noting that
$$
\frac{t}{(1+2k_0At)^2}\leq\frac{1}{8k_0A}
$$ for $t\geq 0$
and letting $r=t_1+\cdots +t_{2k_0}$, we have
\begin{align*}
&\int_{\Delta_{2k_0,1}\setminus\Delta_{2k_0,1-\delta_1}}\frac{\1_{[1,T]}(2k_0t_{2k_0})t_{2k_0}}{(1+2k_0At_{2k_0})^2}\prod_{j=1}^{2k_0-1}\frac{\1_{[1,T]}(2k_0t_j)}{(1+2k_0At_j)^2}d t_1\cdots d t_{2k_0-1}d t_{2k_0}\\
\leq&\int_{\Delta_{2k_0-1,1}}\prod_{j=1}^{2k_0-1}\frac{\1_{[1,T]}(2k_0t_j)}{(1+2k_0At_j)^2}
\bigg(\int_{1-\delta_1}^1\frac{|r-t_1-\cdots-t_{2k_0-1}|d r}{(1+2k_0A|r-t_1-\cdots-t_{2k_0-1}|)^2}\bigg)d t_1\cdots d t_{2k_0-1}\\
\leq&\frac{\delta_1}{8k_0A}\cdot\frac{\gamma^{2k_0-1}}{(2k_0)^{2k_0-1}}\leq\frac{\delta_1}{5.5k_0\log k_0}\cdot\frac{\gamma^{2k_0-1}}{(2k_0)^{2k_0-1}}.
\end{align*}
Hence in view of (\ref{d1}) and (\ref{d2}), we obtain that
\begin{align}
\alpha\leq\bigg(\frac{3.01}{5.5\delta_1k_0\log k_0}+\frac{3.01\cdot 1.5\log k_0}{2\delta_2k_0}+\frac{3.01}{2}+\frac36\bigg)\cdot\frac{\gamma^{2k_0-1}}{(2k_0)^{2k_0-1}}\leq\frac{8.98\gamma^{2k_0-1}}{(2k_0)^{2k_0-1}}.
\end{align}

Similarly, we can get
$$
\beta_2=\int_{\Delta_{2k_0,1}}t_{2k_0}G_{2k_0}(t_1,\ldots,t_{2k_0})F(t_1,\ldots,t_{2k_0})d
t_1\cdots d t_{2k_0-1} d t_{2k_0} \leq
\frac{18}{k_0A}\cdot\frac{\gamma^{2k_0-1}}{(2k_0)^{2k_0-1}}.
$$

Thus applying Theorem \ref{tt}, for any $1\leq j\leq k_0$, we have
\begin{align}
&\sum_{\substack{x\leq n<2x\\ n\equiv b\pmod{W}\\ \mu(n+h_{2j})\neq0}}\theta(n+h_{2j-1})\tau(n+h_{2j})
\bigg(\sum_{d_j\mid n+h_j}\lambda_{d_1,\ldots,d_{2k_0}}\bigg)^2\notag\\
\leq&\log(2x)\sum_{\substack{x\leq n<2x\\ n\equiv b\pmod{W}}}\tau(n+h_{2j})
\bigg(\sum_{d_j\mid n+h_j}\lambda_{d_1,\ldots,d_{2k_0}}\bigg)^2\notag\\
\leq&\log(2x)\cdot\big(4.001\alpha+4\beta_2+o(1)\big)\cdot\frac{x}{(\log R)^{2k_0}}\cdot\frac{W^{2k_0-1}}{\phi(W)^{2k_0}}\notag\\
\leq& \frac{36.1\gamma^{2k_0-1}}{(2k_0)^{2k_0-1}}\cdot\frac{x\log x}{(\log R)^{2k_0}}\cdot\frac{W^{2k_0-1}}{\phi(W)^{2k_0}}.
\end{align}
Furthermore, combining (\ref{tsse}) and (\ref{sse}), we have
\begin{align}
&\sum_{\substack{x\leq n<2x\\ n\equiv b\pmod{W}\\ \mu(n+h_{2j})\neq0,\,1\leq j\leq k}}\bigg(\sum_{j=1}^{k_0}\theta(n+h_{2j-1})-m\log(3x)\bigg)
\bigg(\sum_{d_j\mid n+h_j}\lambda_{d_1,\ldots,d_{2k_0}}\bigg)^2
\notag\\
\geq&\bigg(\frac{8m+0.5}{2}-m\cdot\frac{\log
x}{\log
R}\bigg)\cdot\frac{\gamma^{2k_0}}{(2k_0)^{2k_0}}\cdot\frac{W^{2k_0-1}}{\phi(W)^{2k_0}}\cdot
\frac{(1+o(1))x}{(\log R)^{2k_0-1}}\notag\\
\geq&\frac{0.249\gamma^{2k_0}}{(2k_0)^{2k_0}}\cdot\frac{W^{2k_0-1}}{\phi(W)^{2k_0}}\cdot\frac{x}{(\log
R)^{2k_0-1}}.
\end{align}

It is easy to verify that
$$
\frac{\log k_0}{\log 2}-\frac{8m}{\log 2}-\frac{2\log m}{\log 2}\leq 11.6
\text{\qquad and\qquad}
-\frac{\log\gamma}{\log 2}-\frac{\log m}{\log 2}\leq 3.5.
$$
Thus letting
$$
C_2=1161k_0^2\gamma^{-1}
$$ in (\ref{C2}), we can get
\begin{equation}\label{C2log}
\Omega(n+h_{2j})\leq\frac{\log C_2}{\log2}=\frac{2\log k_0}{\log 2}-\frac{\log\gamma}{\log 2}+\frac{\log 1161}{\log 2}\leq\frac{16m}{\log 2}+\frac{5\log m}{\log 2}+36.9.
\end{equation}

Finally, let  $m=1$ so that $k_0=e^{16}\approx 8.9\times 10^6$ by (\ref{C1}). 
By the calculations of Nicely \cite{N} and Silva \cite{Si}, there exist more than $k_0$ twin-prime pairs in the interval $[2\times 10^7,3\times 10^9]$.
Let $(h_{2j-1},h_{2j})$, $j=1,\ldots,k_0$ be distinct twin-prime pairs in $[2\times 10^7,3\times 10^9]$.
Clearly $\{h_1,\ldots,h_{2k_0}\}$ is admissible. Thus in view of (\ref{C2log}), 
for arbitrarily large $x$, there exist $n\in[x,2x]$ and $1\leq j_1<j_2\leq k_0$ such that $n+h_{2j_1-1}$, $n+h_{2j_2-1}$ are primes and
$$
\Omega(n+h_{2j_1}),\  \Omega(n+h_{2j_2})\leq
\frac{16}{\log 2}+36.9<60,
$$
\qed
\begin{remark}
Suppose that $k_0\geq m^2e^{4m+8}$ and $\{h_1,\ldots,h_{k_0}\}$ is admissible. Then we also have
$$
\sum_{\substack{x\leq n<2x\\ n\equiv b\pmod{W}\\ \mu(n+h_{j})\neq0,\,1\leq j\leq k}}\bigg(\sum_{j=1}^{k_0}\theta(n+h_{j})\bigg(1-\sum_{i\neq j}\frac{\tau(n+h_{i})}{C_3}\bigg)-m\log(3x)\bigg)
\bigg(\sum_{d_j\mid n+h_j}\lambda_{d_1,\ldots,d_{k_0}}\bigg)^2>0,
$$
where $C_3=512k_0^3\log k_0$. Hence, there exist infinitely many $n$ such that
$$
\{n+h_1,n+h_2,\cdots,n+h_{k_0}\}
$$
contains at least $m+1$ primes and
$$
\Omega(n+h_j)\leq\frac{3\log k_0}{\log 2}+\frac{\log\log k_0}{\log 2}+9
$$
for each $j$.
\end{remark}

\section*{Acknowledgement}
\medskip

We are grateful to the annoymous referee for his/her very helpful comments and suggestions on this paper.
This work is supported by the National Natural Science Foundation of
China (Grant No. 11271249) and the Specialized Research Fund for the
Doctoral Program of Higher Education (No. 20120073110059).

\bigskip

\end{document}